\tikzset{commutative diagrams/row sep/normal = 7.5 ex}
\tikzset{commutative diagrams/column sep/normal = 8.5 ex}
\newtheorem{theorem}{Theorem}[section]
\newtheorem{lemma}[theorem]{Lemma}
\newtheorem{corollary}[theorem]{Corollary}
\newtheorem{proposition}[theorem]{Proposition}
\newtheorem*{claim}{Claim}
\theoremstyle{definition}
\newtheorem{example}[theorem]{Example}
\theoremstyle{remark}
\newcommand{\PR}{\mathbb{P}}
\newcommand{\Aut}{\operatorname{Aut}}
\newcommand{\Pic}{\operatorname{Pic}}
\newcommand {\caL} {{\mathcal L}}
\newcommand {\caO} {{\mathcal O}}
\newcommand{\caM}{\mathcal{M}}
\newcommand{\cat}[1]{{\normalfont\textbf{#1}}}
\DeclareMathOperator {\bir} {{{bir}}}
\DeclareMathOperator {\cha} {{{char}}}
\DeclareMathOperator {\Chow} {{{Chow}}}
\DeclareMathOperator {\Mor} {{{Mor}}}
\DeclareMathOperator{\MMor}{\mathcal{M}\!{\it or}}
\DeclareMathOperator {\Spec} {{{Spec}}}
\DeclareMathOperator {\im} {{{im}}}
\DeclareMathOperator{\LLinSys}{\mathcal{L}\!{\it in} \mathcal{S}\!{\it ys}}
\DeclareMathOperator{\LinSys}{{LinSys}}
\DeclareMathOperator {\codim} {{{codim}}}
\newenvironment{subproof}{\begin{proof}[Proof of claim.]}{%
\end{proof}}
\newcommand*{\defeq}{\mathrel{\vcenter{\baselineskip0.5ex \lineskiplimit0pt
			\hbox{\scriptsize.}\hbox{\scriptsize.}}}%
	=}
\newcommand{\citestacks}[1]{{\cite[\href{https://stacks.math.columbia.edu/tag/#1}{#1}]{stacks}}}
\begin{document}

\title{Smooth rational curves on rational surfaces}

\author{Lucas das Dores}
\address{
	IMPA, Estrada Dona Castorina 110, 22460-320, Rio de Janeiro, Brazil
	\textsc{\newline \indent 
		\href{https://orcid.org/0000-0002-8713-5308%
		}{\includegraphics[width=1em,height=1em]{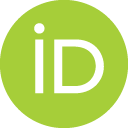} {\normalfont https://orcid.org/0000-0002-8713-5308}}
}}
\email{lucas.dores@impa.br}
\subjclass[2010]{Primary 14H10. Secondary 14J26.}

\date{}

\begin{abstract}
	 Consider the scheme parametrizing non-constant morphisms from a fixed projective curve to a projective surface. There is a rational map between this scheme and the Chow variety of $1$-cycles on the surface. We prove that, if the curve is non-singular, then this rational map is a morphism. As a consequence, we obtain that, if the surface is rational and we fix a divisor class containing a non-singular rational curve, then the scheme parametrizing rational curves on this class is irreducible. Further, if the class has non-negative self-intersection, then the scheme of rational curves has expected dimension.
\end{abstract}

\maketitle
 
\section{Introduction}

There are many ways of viewing curves on an algebraic variety: as effective $1$-cycles, closed subschemes or morphisms from a fixed curve. Each of them gives rise to different parameter spaces of curves: Chow schemes, Hilbert schemes or the scheme of morphisms from a fixed curve, respectively. These parameter spaces are related to each other via canonical morphisms or rational maps; see \cite[Chapter I]{kollar-RCAV} for an overview.

The aim of this paper is to study the relation between the schemes of morphisms
from a fixed curve to a projective surface and the schemes parametrizing 1-cycles of
this surface.

Throughout the text we let $C$ be a projective curve and $X$ be a projective surface over an algebraically closed field $k$ of arbitrary characteristic. Let $\Mor_{>0}(C,X)$ be the scheme parametrizing non-constant morphisms from $C$ to $X$. Recall that Chow functors of $1$-cycles are introduced in \cite{kollar-RCAV}. These functors are represented by a scheme if $\cha k = 0$, but, in general, only coarsely represented by a scheme when $\cha k > 0$, see \cite[Theorems I.3.21 and I.4.13]{kollar-RCAV}. We denote this scheme $\Chow_{1}(X)$. Furthermore, there is a morphism
\begin{equation*}
\Theta: \Mor_{>0}^n(C,X) \rightarrow \Chow_{1}(X),
\end{equation*}
where $\Mor_{>0}^n(C,X)$ stands for the normalization of $\Mor_{>0}(C,X)$, see \cite[Corollary I.6.9]{kollar-RCAV}. This morphism is described by taking any $k$-point $[f]$ corresponding to a morphism $f:C \rightarrow X$ to the cycle associated to the proper pushforward $f_*[C]$. In particular, $\Theta$ defines a rational map from $\Mor_{>0}(C,X)$ to $\Chow_{1}(X)$.

On the other hand, since $X$ is a surface, $1$-cycles are just divisors. Hence, they can be parametrized by a functor taking each $k$-scheme $S$ to the set of relative effective Cartier divisors on $X \times S$ over $S$. This functor is representable in arbitrary characteristic and its representing scheme is the disjoint union of complete linear systems of divisors on $X$, which we denote $\LinSys(X)$, see Section \ref{daf}.

It is natural to ask whether we can obtain a morphism from $\Mor_{>0}(C,X)$ to $\LinSys(X)$ such as $\Theta$. Indeed, the first result of this paper tells us that when $C$ is nonsingular, then we have such a morphism without the need to use normalization. More precisely, we have the following.

\begin{theorem}
	\label{dac}
	Suppose $C$ is a nonsingular and irreducible projective curve and $X$ is a projective surface over $k$. Then there is a natural morphism
	\[
	\Xi: \Mor_{>0}(C,X) \rightarrow \LinSys(X)
	\]
	defined on $k$-points as $\Xi([f]) = f_*[C]$, which is invariant under the $\Aut(C)$-action
	\begin{align*}
	\Aut(C) \times \Mor_{>0}(C,X) & \longrightarrow \Mor_{>0}(C,X) \\
	([\alpha], [f]) & \longmapsto [f \circ \alpha]. 
	\end{align*}
\end{theorem}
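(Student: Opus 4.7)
Since $\LinSys(X)$ represents the functor sending $S$ to the set of relative effective Cartier divisors on $X \times S$ over $S$, it suffices to associate, to each $S$-point of $\Mor_{>0}(C,X)$ --- that is, to each $S$-morphism $F \colon C \times S \to X \times S$ over $S$ with non-constant geometric fibres --- a relative effective Cartier divisor $D_F \subset X \times S$ in a way that is natural in $S$ and recovers $f_*[C]$ on $k$-points. The first thing I would do is observe that any such $F$ is automatically finite: it is proper because $C \times S$ is proper over $S$, and each restriction $F_s \colon C \to X$ is quasi-finite, since a non-constant morphism from an irreducible projective curve to $X$ has only finite fibres; a proper quasi-finite morphism is finite.

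With $F$ finite, $\mathcal{F} := F_*\caO_{C \times S}$ is a coherent sheaf on $X \times S$ that is $S$-flat (because $C \times S$ is $S$-flat and $F$ is finite) and has support of pure relative dimension one over $S$. To extract a Cartier divisor, I would exploit the smoothness of $C$: together with the smoothness of $X$ (as a rational surface in the intended application, or via standard Cohen--Macaulay hypotheses more generally) this forces $\mathcal{F}$ to have projective dimension one locally on $X \times S$ by Auslander--Buchsbaum, so that locally there is a two-term resolution
\[
0 \longrightarrow \caE \xrightarrow{\ \varphi\ } \caE' \longrightarrow \mathcal{F} \longrightarrow 0
\]
by vector bundles of the same rank (the rank equality comes from $\mathcal{F}$ being torsion). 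Then $\det(\varphi) \colon \det \caE \to \det \caE'$ is injective and cuts out the local effective Cartier divisor $D_F$; equivalently, $D_F$ is defined by the $0$-th Fitting ideal of $\mathcal{F}$. Because Fitting ideals commute with arbitrary base change, the assignment $F \mapsto D_F$ is functorial in $S$, and on each geometric fibre one verifies that the construction gives back $f_*[C]$ as an effective Cartier divisor, proving $S$-flatness of $D_F$.

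This yields the required natural transformation, and hence the morphism $\Xi$. The $\Aut(C)$-invariance is then immediate from the construction: for $\alpha \in \Aut(C)$, replacing $F$ by $F \circ (\alpha \times \mathrm{id}_S)$ precomposes with an $S$-automorphism of $C \times S$, so the pushforward $F_*\caO_{C \times S}$ --- and hence $D_F$ --- is unchanged. The main obstacle in the whole argument is the middle step: showing that $\mathcal{F}$ really defines a \emph{Cartier} divisor rather than merely a Weil divisor or scheme-theoretic image. This is precisely where the smoothness of $C$ is essential, because it is what controls the projective dimension of $F_*\caO_{C\times S}$ and makes the relevant Fitting ideal invertible.
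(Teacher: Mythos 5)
Your overall strategy---take $\mathcal{F}=F_*\caO_{C\times S}$, cut out the divisor by its zeroth Fitting ideal via a two-term locally free resolution, use base-change compatibility of Fitting ideals for naturality, and check on fibres that one recovers $f_*[C]$---is in fact very close to the paper's: its Lemma \ref{daa} (quoted from Kleiman et al.) produces exactly this Fitting-ideal divisor together with its $S$-flatness. The genuine gap is in the step you yourself identify as the crux. You cannot deduce that $\mathcal{F}$ has projective dimension one locally on $X\times S$ from Auslander--Buchsbaum: that formula, $\mathrm{pd}\,M+\mathrm{depth}\,M=\mathrm{depth}\,R$, applies only to modules of \emph{finite} projective dimension, and finiteness of $\mathrm{pd}$ over $\caO_{X\times S,x}$ is guaranteed only when $X\times S$ is regular. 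Since $S$ ranges over all locally noetherian $k$-schemes, $X\times S$ is essentially never regular---already for $S=\Spec k[\epsilon]/(\epsilon^2)$, precisely the kind of test scheme a functor-of-points argument must handle, the ambient ring is non-reduced and the argument collapses. (Note also that the theorem does not assume $X$ smooth, so even the fibrewise version of your regularity appeal is not available in the stated generality.)

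The correct source of the length-one resolution is not regularity of the ambient scheme but the structure of the morphism itself: because $C$ is a smooth curve over $k$, the projection $C_S\times_S X_S\to X_S$ is smooth of relative dimension one and $F$ factors through it via its graph, a regular immersion of codimension two; combined with finiteness, the relative codimension-one condition and curvilinearity, this gives that $F$ is locally of flat dimension one over \emph{any} base, which is what the paper verifies before invoking Lemma \ref{daa}. Two smaller points. First, identifying the $S$-points of $\Mor_{>0}(C,X)$ with morphisms all of whose geometric fibres are non-constant itself requires an argument (the paper uses the Rigidity Lemma, since the functor only says the morphism does not factor through $S$). Second, ``the fibres of $D_F$ are Cartier divisors, hence $D_F$ is $S$-flat'' is valid only once you know $\mathrm{Fitt}_0(\mathcal{F})$ is locally principal with fibrewise nonzerodivisor generator and apply the local criterion of flatness---so this step, too, rests on the resolution you have not actually established. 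Your naturality argument via base change of Fitting ideals and of finite pushforwards, and the $\Aut(C)$-invariance, are fine as stated.
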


Next, consider $C = \PR^1$. Let $\beta$ be a class in $\Pic X$ and $|\beta|$ be the corresponding complete linear system. Notice that the preimage of $|\beta|$ under $\Xi$ is a subscheme $\Mor(\PR^1,X, \beta) \subset \Mor_{>0}(\PR^1,X)$ parametrizing morphisms $f: \PR^1 \rightarrow X$ such that the divisor $f_*[\PR^1]$ belongs to the class $\beta$.

Let $\Mor_{\bir}(\PR^1,X,\beta) \subset \Mor(\PR^1,X,\beta)$ be the open subscheme parametrizing morphisms which are birational onto their images, and let $\Mor_{\overline{\bir}}(\PR^1, X, \beta)$ be the union of components of $\Mor(\PR^1, X, \beta)$ such that $\Mor_{\bir}(\PR^1,X,\beta) \cap \Mor(\PR^1,X,\beta) \neq \emptyset$, that is, the closure of $\Mor_{\bir}(\PR^1,X,\beta)$ in $\Mor(\PR^1,X,\beta)$. It is natural to ask under which conditions on $X$ and on the classes $\beta$ we can determine whether $\Mor(\PR^1, X, \beta)$ or $\Mor_{\overline{\bir}}(\PR^1, X, \beta)$ are irreducible. In \cite{testa-09}, it is proved that if $X$ is a Del Pezzo surface of degree greater or equal to $2$, then for any class $\beta$ in $\Pic X$, $\Mor_{\overline{\bir}}(\PR^1, X, \beta)$ is either irreducible or empty.

It is also natural to ask whether we can determine the dimension of the components of $\Mor(\PR^1, X, \beta)$. We have that for every irreducible component $M$ in $\Mor(\PR^1,X, \beta)$, there is a bound for its dimension
\begin{equation}
\label{aac}
\dim M \ge -K_X \cdot \beta + 2,
\end{equation}
where $K_X$ denotes the canonical class of $X$, see \cite[p.45]{debarre-HDAG}. We call this lower bound the \emph{expected dimension} of $M$.

The main result of this paper is a criterion to determine when $\Mor(\PR^1, X, \beta)$ is irreducible, and has expected dimension.

\begin{theorem}
	\label{aab}
	Let $X$ be a rational surface and let $\beta \in \Pic X$ be a class such that $|\beta|$ has a non-singular rational curve. Then, $\Mor(\PR^1, X, \beta)$ is irreducible and
	\begin{equation*}
		\dim \Mor(\PR^1, X, \beta) =
		\begin{cases}
		3, \text{ if } \beta^2 < 0, \\
		-K_X \cdot \beta + 2, \text{ if } \beta^2 \ge 0.
		\end{cases}
	\end{equation*}  
\end{theorem}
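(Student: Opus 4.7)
The plan is to apply Theorem~\ref{dac} to obtain the morphism $\Xi\colon \Mor(\PR^1, X, \beta) \to |\beta|$ and analyze its fibers. Since $|\beta|$ is a projective space it is irreducible, and by hypothesis the open subset $U \subset |\beta|$ of non-singular rational curves is non-empty, hence dense. The hypothesis also forces $p_a(\beta) = 0$, so every irreducible divisor in $|\beta|$ has arithmetic genus $0$ and is therefore rational; writing $|\beta|^{\mathrm{ir}}$ for the open locus of irreducible divisors, $U$ is dense in $|\beta|^{\mathrm{ir}}$. For any $D \in |\beta|^{\mathrm{ir}}$, the fiber $\Xi^{-1}(D)$ is a single $\Aut(\PR^1)$-torsor of dimension $3$ (parametrizing isomorphisms $\PR^1 \to \widetilde{D}$ composed with the normalization $\widetilde{D} \to D$). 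Hence the birational locus $\Mor_{\bir}(\PR^1, X, \beta) = \Xi^{-1}(|\beta|^{\mathrm{ir}})$ is irreducible of dimension $\dim|\beta| + 3$.

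To compute $\dim|\beta|$: if $\beta^2 < 0$, intersecting any $D \in |\beta|$ with $C \in U$ yields $D \cdot C = \beta^2 < 0$, forcing $C$ to be a component of $D$, and writing $D = C + D'$ gives $[D'] = 0$, hence $D = C$; so $|\beta| = \{C\}$ and $\dim \Mor(\PR^1, X, \beta) = 3$. If $\beta^2 \geq 0$, the short exact sequence
\[
0 \to \caO_X \to \caO_X(C) \to \caO_{\PR^1}(C^2) \to 0
\]
together with $H^i(\caO_X) = 0$ for $i \geq 1$ (as $X$ is rational) and $H^1(\caO_{\PR^1}(C^2)) = 0$ (as $C^2 \geq 0$) gives $h^0(\caO_X(C)) = C^2 + 2$, so $\dim|\beta| = \beta^2 + 1$. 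By adjunction $-K_X \cdot \beta = \beta^2 + 2$, yielding $\dim \Mor(\PR^1, X, \beta) = \beta^2 + 4 = -K_X \cdot \beta + 2$.

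For irreducibility, it remains to show that multi-covers—morphisms $\PR^1 \to C_0 \to X$ of degree $d > 1$ with $\beta = d[C_0]$—lie in $\overline{\Mor_{\bir}(\PR^1, X, \beta)}$. The equation $d(d\gamma^2 + K_X \cdot \gamma) = -2$ derived from $p_a(\beta) = 0$ with $\gamma = \beta/d$ immediately restricts $d$ to $2$. If $\beta^2 < 0$, then $|\beta| = \{C\}$ is reduced and admits no multi-covers. If $\beta^2 \geq 0$, a further arithmetic analysis (integrality of $p_a(\gamma)$) pins down the admissible values of $\gamma^2$, and the analogous vanishing computation applied to $\gamma$ bounds $\dim|\gamma|$. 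A direct dimension count then shows the multi-cover locus has dimension $\dim|\gamma| + 2d + 1$ strictly less than $\dim|\beta| + 3$; a smoothing argument degenerating a general smooth rational curve in $|\beta|$ to the non-reduced divisor $2C_0$ places this locus inside $\overline{\Mor_{\bir}(\PR^1, X, \beta)}$.

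The principal obstacle is the final smoothing step: given a double cover $f_0\colon \PR^1 \to C_0 \subset X$ with $f_{0*}[\PR^1] = 2C_0$, one must produce a family $f_t\colon \PR^1 \to X$ of birational morphisms whose image divisors $f_{t*}[\PR^1]$ lie in $U$ for $t \neq 0$ and specialize to $2C_0$ as $t \to 0$. This amounts to exhibiting a suitable linear pencil in $|\beta|$ through $2C_0$ whose general member is a smooth rational curve and checking that the family of normalizations extends coherently; the rationality of $X$ provides the deformation-theoretic flexibility needed to construct such a pencil.
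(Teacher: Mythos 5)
Your setup---fibering $\Mor(\PR^1,X,\beta)$ over $|\beta|$ via the morphism $\Xi$ of Theorem \ref{dac}, identifying the fibers over smooth rational members with $3$-dimensional $\Aut(\PR^1)$-orbits, computing $\dim|\beta|=\beta^2+1=-K_X\cdot\beta-1$, and treating $\beta^2<0$ separately via $|\beta|=\{C\}$---matches the paper. The genuine gap is exactly where you flag it: the assertion that the multi-cover locus lies in $\overline{\Mor_{\bir}(\PR^1,X,\beta)}$. Knowing that this locus has dimension $\dim|\gamma|+5<\dim|\beta|+3$ does not prevent it from being a separate irreducible component, and the smoothing argument you sketch does not obviously exist: a pencil in $|\beta|$ through $2C_0$ with smooth general member degenerates the \emph{divisor}, but you must degenerate the \emph{morphism} $f_0\colon\PR^1\to C_0\subset X$, and the limit of the parametrizations of the nearby smooth members need not be the given degree-two cover $f_0$ (nor need the family of maps extend over the special point at all without modification). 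Nothing in your outline closes this.

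The paper avoids any smoothing construction by invoking the lower bound \eqref{aac}: \emph{every} irreducible component $M$ of $\Mor(\PR^1,X,\beta)$ satisfies $\dim M\ge -K_X\cdot\beta+2$. Once one knows (i) there is a unique component $M^0_\beta$ dominating $|\beta|$, of dimension exactly $\dim|\beta|+3=-K_X\cdot\beta+2$, and (ii) any other component maps into a proper closed subset of $|\beta|$ and hence, by a fiber-dimension estimate (for double covers this is precisely your $\dim|\gamma|+5<\dim|\beta|+3$ count), has dimension strictly below $-K_X\cdot\beta+2$, then \eqref{aac} forbids such a component from existing. The multi-cover locus is thereby absorbed into $M^0_\beta=\overline{\Mor_{\bir}(\PR^1,X,\beta)}$ with no degeneration argument; you should replace your final step with this. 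A secondary, fixable point: irreducibility of $\Xi^{-1}(|\beta|^{\mathrm{ir}})$ does not follow merely from having an irreducible base with irreducible equidimensional fibers---that yields only a unique dominating component, as in Lemma \ref{eaj}; the paper claims only that much and again lets \eqref{aac} eliminate everything else.
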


Under these conditions we see that $\Mor_{\overline{\bir}}(\PR^1, X,\beta) \neq \emptyset$. Hence, it is straightforward to deduce the following.

\begin{corollary}
	Let $X$ and $\beta$ satisfy the hypotheses of Theorem \ref{aab}. Then
	\[
	\Mor(\PR^1, X, \beta) = \Mor_{\overline{\bir}}(\PR^1, X, \beta).
	\]
\end{corollary}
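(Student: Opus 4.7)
The plan is to exploit the irreducibility of $\Mor(\PR^1, X, \beta)$ already established in Theorem \ref{aab}. By definition, $\Mor_{\overline{\bir}}(\PR^1, X, \beta)$ is a union of irreducible components of $\Mor(\PR^1, X, \beta)$. Since $\Mor(\PR^1, X, \beta)$ has a unique component, $\Mor_{\overline{\bir}}(\PR^1, X, \beta)$ is either empty or equal to the whole $\Mor(\PR^1, X, \beta)$. Hence the entire problem reduces to exhibiting a single $k$-point of $\Mor_{\bir}(\PR^1, X, \beta)$.

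To produce such a point, I would use the hypothesis that $|\beta|$ contains a nonsingular rational curve $C_0 \subset X$. The normalization (in fact, any isomorphism) $\nu : \PR^1 \xrightarrow{\sim} C_0$ composed with the closed immersion $C_0 \hookrightarrow X$ gives a morphism $f_0 : \PR^1 \to X$ which is birational onto its image. One then has to verify that $(f_0)_*[\PR^1] \in |\beta|$, which follows because $f_0$ has image $C_0 \in |\beta|$ and is birational, so the pushforward cycle is exactly $C_0$ as a divisor; in terms of $\Xi$ (Theorem \ref{dac}), $\Xi([f_0])$ lies over $|\beta|$. Therefore $[f_0] \in \Mor_{\bir}(\PR^1, X, \beta)$, so this set is non-empty.

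Combining the two steps, the unique component of $\Mor(\PR^1, X, \beta)$ meets $\Mor_{\bir}(\PR^1, X, \beta)$, whence by definition it is contained in $\Mor_{\overline{\bir}}(\PR^1, X, \beta)$, giving the desired equality. There is no real obstacle here: the statement is essentially a formal consequence of irreducibility (Theorem \ref{aab}) together with the very existence of the nonsingular rational curve in $|\beta|$, which is built into the hypotheses. The only minor point to check is the elementary fact that an isomorphism onto a curve in $|\beta|$ is a birational morphism whose image has class $\beta$, ensuring that $[f_0]$ lies in the correct scheme $\Mor(\PR^1, X, \beta) = \Xi^{-1}(|\beta|)$.
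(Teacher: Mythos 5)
Your proposal is correct and matches the paper's (implicit) argument exactly: the paper notes that the hypotheses force $\Mor_{\overline{\bir}}(\PR^1,X,\beta)\neq\emptyset$ (via the isomorphism $\PR^1\to C_0$ for the nonsingular rational curve $C_0\in|\beta|$) and then deduces the equality from the irreducibility established in Theorem \ref{aab}.
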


\textbf{Structure of the paper.} In Section \ref{daf} we recall the definition of complete linear systems $|\beta|$ in terms of their functor of points, as well as the results needed to prove Theorem \ref{dac}. In Section \ref{caa} we prove Theorem \ref{aab} and, as an example, we give a complete list of the divisor classes satisfying the hypotheses of Theorem \ref{aab} on a smooth cubic surface in $\PR^3$.

\textbf{Acknowledgements.} I would like to express my gratitude to Eduardo Esteves and Vladimir Guletski\u{\i} for enlightening discussions. I am also thankful to Carolina Araujo, Thomas Eckl, and Roy Skjelnes for helpful suggestions.

This work was supported by CNPq, National Council for Scientific and Technological Development under the grant [159845/2019-0].

\section{Curves on surfaces}
\label{daf}

We recall that $\LinSys(X)$ is the scheme parametrizing effective divisors on $X$. More precisely, denote $\cat{Noe}/k$ to be the category of locally noetherian schemes over $k$, and $\cat{Set}$ to be the category of sets. Then, $\LinSys(X)$ is the scheme representing the functor 
\[
\LLinSys(X): (\cat{Noe}/k)^{op} \rightarrow \cat{Set}
\]
taking each $k$-scheme $f:S \rightarrow \Spec k$ to
\[
\LLinSys(X)(S) \defeq \left\{\text{Relative effective Cartier divisors of  } X_{S} \text{ over } S\right\},
\]
where $X_S= X \times S$. Furthermore, for each $\beta \in \Pic X$, let $\caL$ be an invertible sheaf in the class $\beta$. Hence, we can define the subfunctor 
\[
\LLinSys_{\beta}(X): (\cat{Noe}/k)^{op} \rightarrow \cat{Set}
\]
defined as
\[
\LLinSys_{\beta}(X)(S) \defeq \left\{\begin{array}{c}
\text{ Relative effective divisors } D \subset X_{S} \text{ over } S \\
\text{such that } \caO_{X_S}(D) \cong \caL_S \otimes f_S^*\caM \text{ for some } \\ \caM \text{ invertible on } S  \end{array}\right\},
\]
where $\caL_S$ and $f_S$ denote the base changes of $\caL$ and $f$. If $X$ is integral, then $\LLinSys_{\beta}(X)$ is represented by a projective space, see \cite[Theorem 9.3.13]{kleiman-05}. We call this projective space the \emph{complete linear system} of divisors of $\beta$ and denote it $|\beta|$. It follows that 
\[
\LinSys(X) = \coprod_{\beta \in \Pic X} |\beta|.
\]

We state two of the results used in the proof of Theorem \ref{dac}. In order to do so, we use the following notation: for any noetherian scheme $Y$ and any closed subscheme $W$ of $Y$ we denote $[W]$ to be the \emph{fundamental cycle} on $Y$ associated to it. We remark that both results come from much more general and detailed theories. The versions stated here have been simplified for our purposes. 

\begin{lemma}[{\cite{kleimanal-96}}]
	\label{daa}
	Let $S$ be a noetherian scheme, and let $f:Y_1 \rightarrow Y_2$ be a $S$-morphism of noetherian schemes. Suppose that
	\begin{enumerate}
		\item $f$ is finite;
		\item $f$ is a local complete intersection morphism;
		\item $\dim \caO_{Y_2,{f(p)}} - \dim \caO_{Y_1,p} = 1$ for all $p \in Y_1$ and;
		\item $f$ is curvilinear, \emph{i.e.} for each $p \in Y_1$, the rank of the fiber $\Omega^1_{Y_1/Y_2}(p)$ is at most one.
		
	\end{enumerate}
Then, the proper pushforward $f_*[Y_1]$ is the fundamental cycle $[W]$ associated to a relative effective Cartier divisor $W \hookrightarrow Y_2$ over $S$.
\end{lemma}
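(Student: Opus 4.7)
The argument reduces immediately to the affine local setting: $Y_2 = \Spec B$, $Y_1 = \Spec A$, with $A$ a finite $B$-algebra satisfying the four hypotheses. The plan is to establish a convenient local presentation of $A$, construct $W$ from this presentation, and then match multiplicities.

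The first step is a local structure theorem. The curvilinearity hypothesis combined with finiteness shows, via Nakayama applied to $\Omega^1_{A/B}$, that locally around any point $p \in \Spec A$ the algebra $A$ is generated by a single element $t$ over $B$, so $A \cong B[t]/J$ for some ideal $J$. The lci condition together with the dimension drop $\dim \caO_{Y_2,f(p)} - \dim \caO_{Y_1,p} = 1$ then forces $J$ to be locally generated by a regular sequence of length two in $B[t]$, at least one of whose generators can be taken monic in $t$. A model to keep in mind is $A = B[t]/(p_1,p_2)$ with $p_1$ the monic generator, as in the case of the normalization of a cuspidal curve.

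Next, I would define $W$ as the scheme-theoretic image of $f$, which on such a chart is cut out by the resultant $w \in B$ of the two generators of $J$ with respect to $t$, or equivalently by the zeroth Fitting ideal of $A$ viewed as a $B$-module. Since $w$ is a single element, $W$ is locally a principal Cartier divisor on $Y_2$, and the stability of lci morphisms under base change ensures that $w$ remains a non-zero-divisor after restriction to any fiber of $S$, making $W$ a relative effective Cartier divisor over $S$. To verify $f_*[Y_1] = [W]$, I observe that both cycles are supported on $W$, so it suffices to compare multiplicities at each generic point $\eta$ of an irreducible component of $W$: a standard resultant identity equates $\operatorname{length}_{B_\eta}(B_\eta/wB_\eta)$ with the $B_\eta$-length of the stalk of $f_*\caO_{Y_1}$ at $\eta$, which is precisely the multiplicity of $\eta$ in the pushforward cycle.

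The main obstacle is the first step: extracting the regular-sequence presentation of $J$ with a monic generator from the hypotheses, and tracking the interaction with base change. This is where all four assumptions work together most delicately, since curvilinearity alone gives monogenicity as a $B$-algebra but it is the lci and codimension hypotheses that pin down the number and type of generators of $J$. Once this local picture is in hand, the construction of $W$ and the multiplicity comparison reduce to essentially classical computations with polynomial resultants and Fitting ideals.
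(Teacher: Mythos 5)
Your proposal sets out to reprove the cited result of Kleiman--Lipman--Ulrich from scratch, whereas the paper's own proof is purely an assembly of statements from that reference: conditions (1)--(3) give that $f$ is locally of flat dimension one (their Proposition 2.10), whence $f_*[Y_1]=[W]$ for an effective divisor $W$ cut out by the zeroth Fitting ideal of $f_*\caO_{Y_1}$ (their Corollary 2.5 and Theorem 3.5), and the local criterion of flatness then yields $S$-flatness of $W$. A self-contained argument would be welcome, but as written yours has two genuine gaps. First, the whole construction hinges on the local structure theorem (monogenicity of $A$ over $B$ plus a length-two regular sequence presentation of $J$ containing a monic polynomial), which you assert and then explicitly defer as ``the main obstacle''; that is precisely the hard content of the lemma, and it is also subtly misstated. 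Curvilinearity is a condition at points $p$ of $Y_1$, so Nakayama gives at best a presentation of the localization $A_p$ at one point of the fiber over $q=f(p)$; since $f$ is finite, $f^{-1}(q)$ may consist of several points, and the resultant of the two generators attached to a single branch is not a local equation for $W$ near $q$ --- one must take the product of the contributions of all branches, i.e.\ the zeroth Fitting ideal of $A$ as a $B$-module. The cleaner route, and the one the reference takes, bypasses monogenicity altogether: finiteness, the lci hypothesis and the codimension condition yield a local presentation $0\to\caO_{Y_2}^n\to\caO_{Y_2}^n\to f_*\caO_{Y_1}\to 0$, the local equation of $W$ is the determinant of the middle map, and the multiplicity comparison at generic points of $W$ is the standard identity $\operatorname{length}\bigl(R/(\det\varphi)\bigr)=\operatorname{length}(\operatorname{coker}\varphi)$ over a one-dimensional local ring.

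Second, the relative part of the conclusion is not established. ``Stability of lci morphisms under base change'' is not a valid justification: lci is preserved under flat (or Tor-independent) base change, not arbitrary base change, and in any case it is not the relevant point. What is needed is that formation of the zeroth Fitting ideal commutes with base change, that each fiber $W_s\subset Y_{2,s}$ is again an effective Cartier divisor (by the absolute statement applied to $f_s$), and then the local criterion of flatness to conclude that $W$ itself is flat over $S$. Flatness of $W$ over $S$ is exactly what the word ``relative'' in the conclusion means, and your sketch never addresses it.
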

\begin{proof}
	This can be deduced from the aforementioned reference in the following way: from \cite[Proposition 2.10]{kleimanal-96} we have that $f$ is locally of flat dimension $1$, and by \cite[Corollary 2.5]{kleimanal-96} we have that $f_*[Y_1] = [W]$ for an effective divisor $W$ of $Y_2$. This divisor is defined by a zeroth Fitting ideal by \cite[Theorem 3.5]{kleimanal-96}, hence it follows by the local criterion of flatness that it is $S$-flat.
\end{proof}

\begin{lemma}[{\cite{suslinal-RCCS}}]
	\label{dab}
	Let $S$ be a connected noetherian scheme and $Y_1$ be a flat $S$-scheme. Let $f: Y_1 \rightarrow Y_2$ be a proper $S$-morphism of schemes of finite type and $S'$ be a noetherian $S$-scheme. Let $f_{S'}: Y_{1,S'} \rightarrow Y_{2,S'}$ be the base change of $f$. Suppose that $f_*[Y_1] = [W]$, where $W$ is a $S$-flat closed subscheme of $Y_2$. Then $(f_{S'})_*[Y_{1,{S'}}] = [W_{S'}]$.
\end{lemma}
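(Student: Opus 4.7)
The plan is to compare the coefficients of the cycles $(f_{S'})_*[Y_{1,S'}]$ and $[W_{S'}]$ at each generic point of an irreducible component of $Y_{2,S'}$, and to use the flatness of $Y_1$ and $W$ over $S$ to reduce this to the assumed equality $f_*[Y_1] = [W]$ on $Y_2$.

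First I would set up the coefficient formulas at generic points. For a closed subscheme $V$ of a noetherian scheme $Z$, the fundamental cycle $[V]$ has coefficient $\ell_{\caO_{V,\eta}}(\caO_{V,\eta})$ at each generic point $\eta$ of an irreducible component of $V$. For a proper morphism $g: Z_1 \to Z_2$ and a generic point $\xi$ of $Z_2$, the coefficient of $\overline{\{\xi\}}$ in $g_*[Z_1]$ is $\sum_{\eta \mapsto \xi} [k(\eta) : k(\xi)] \cdot \ell_{\caO_{Z_1, \eta}}(\caO_{Z_1, \eta})$ summed over those generic points $\eta$ of $Z_1$ mapping to $\xi$ for which $g|_{\overline{\{\eta\}}}$ is generically finite, with zero contribution from the other components. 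Thus the hypothesis $f_*[Y_1] = [W]$ is an equality of such coefficients at every generic point of $Y_2$.

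Next I would translate the base change into a local computation. Fix a generic point $\xi' \in Y_{2,S'}$ with image $\xi \in Y_2$ and image $s' \in S'$ sitting over $s \in S$. Because $Y_1$ is $S$-flat, each generic point $\eta'$ of $Y_{1,S'}$ above $\xi'$ lies above some generic point $\eta$ of $Y_1$ over $s$; the multiplicity at $\eta'$ can be expressed in terms of the multiplicity at $\eta$ via a factor determined only by the local extension $\caO_{S,s} \to \caO_{S',s'}$, using the standard additivity of length under flat base change. The analogous formula applies to $W$ and $W_{S'}$ thanks to $S$-flatness of $W$. Since the scaling factor depends only on the extension of local rings on $S$ and not on the ambient scheme, the assumed coefficient equality at $\xi$ transports to the desired equality at $\xi'$.

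The main obstacle is the combinatorics of how irreducible components of $Y_1$ and $W$ can split under the base change $S' \to S$, and the verification that the multiplicities transform in parallel on the pushforward side and on the fundamental-cycle side. Both transformations are governed by the same flat-extension arithmetic, which is precisely where $S$-flatness of $Y_1$ and of $W$ enters. This compatibility is the content of the relative-cycle formalism of the cited reference: flatness makes both $f_*[Y_1]$ and $[W]$ represent the same $S$-relative cycle on $Y_2$, and such relative cycles pull back canonically along $S' \to S$, commuting both with the formation of the fundamental cycle of a flat closed subscheme and with proper pushforward of a fundamental cycle out of a flat source. Once this framework is in place, the lemma follows directly.
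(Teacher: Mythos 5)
Your proposal is correct and ultimately takes the same route as the paper: the paper's entire proof is a one-line appeal to Theorem 3.6.1 of the cited Suslin--Voevodsky reference, and your argument likewise defers the essential compatibility (that $f_*[Y_1]$ and $[W]$ define the same $S$-relative cycle and that such cycles base-change canonically, commuting with proper pushforward from a flat source and with formation of the cycle of a flat subscheme) to that same formalism. The only caveat is that your intermediate claim that the multiplicity at $\eta'$ differs from that at $\eta$ by a factor depending only on $\caO_{S,s} \to \caO_{S',s'}$ is an oversimplification of how relative cycles pull back along a non-flat $S' \to S$, but since you explicitly outsource that step to the reference, this does not affect the validity of the argument.
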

\begin{proof}
	This is a particular case of \cite[Theorem 3.6.1]{suslinal-RCCS}.
\end{proof}

\begin{proof}[Proof of Theorem \ref{dac}]
	It suffices to define a morphism between the functors of points of $\Mor_{>0}(C,X)$ and $\LinSys(X)$. Notice that since $C$ is geometrically irreducible the functor of points of the former scheme evaluated at an irreducible noetherian $k$-scheme $S$ is given by
	\[
	\MMor_{>0}(C,X)(S) \defeq
	\left\{
	\begin{array}{c}
	S\text{-morphisms } g: C_S \rightarrow X_S \\
	\text{ which do not factor through } S
	\end{array}
	\right\}.
	\]
	We check that every morphism in this set satisfies the hypotheses of Lemma \ref{daa}.
	
	For each point $s \in S$, let $g_s: C_s \rightarrow X_s$ be the induced morphism between fibers and $D$ be the scheme-theoretic image of $g_s$, then either $D$ is a point or a curve. If $D$ was a point, then by the Rigidity Lemma \cite[Proposition 6.1, pg 115]{mumfordal-GIT}, we have that $C_S$ factors through $S$, which is a contradiction, therefore $D$ is a curve and $g_s$ factors through a surjective morphism $C_s \rightarrow D$. Since $C_s$ is irreducible this surjective morphism is finite, see \citestacks{0CCL}. In particular, for every $q \in X$ over $s$, $g^{-1}(q) \subset C_s$ is either empty or consists of finitely many points and since $g$ is proper we conclude $g$ is also a finite morphism. Also notice that $g$ factors through the smooth morphism $C_S \times_S X_S \rightarrow X_S$ via the graph morphism, therefore it is a local complete intersection.
	
	Further, since both $C_S$ and $X_S$ are flat over $S$, for any point $p \in C_S$ over $s \in S$, let $q = g(p) \in X_S$. We have that
	\[
	\dim \caO_{X_{S},{q}} - \dim \caO_{C_S,p} = \dim \caO_{X_s,{q}} - \dim \caO_{C_s,p},
	\]
	see \cite[Prop. III.9.5]{hartshorne-AG}. Hence, we have
	\[
	\dim \caO_{X_{S},{q}} - \dim \caO_{C_S,p} = \codim (\overline{\{q\}}, X_s) - \codim (\overline{\{p\}}, C_s).
	\]
	Notice that we can have two situations:
	\begin{enumerate}
		\item the closure of both $p$ and $q$ on the fibers are of maximal codimension;
		\item $p$ is the generic point of $C_s$ and $\overline{\{q\}} \subset X_s$ is of dimension $1$ (since if $\overline{\{q\}}$ was of dimension $0$ it would be a closed point, in this case $C_s$ would be contracted to a point by $g_s$ and, again by the Rigidity Lemma, $g$ would factor through $S$).
	\end{enumerate}
	In both situations we obtain $\dim \caO_{X_{S},{q}} - \dim \caO_{C_S,p} = 1$.
	
	Finally, notice that $\Omega_{C_S/X_S}(p) \cong \Omega_{C_s/X_s}(p)$ and we have the exact sequence
	\[
	g_s^*\Omega_{X_s/\kappa(s)} \rightarrow \Omega_{C_s/\kappa(s)} \rightarrow \Omega_{C_s/X_s} \rightarrow 0.
	\]
	It follows that we have a surjective map $\Omega_{C_s/\kappa(s)}(p) \twoheadrightarrow \Omega_{C_s/X_s}(p)$. Since $C_s$ is smooth over $\Spec \kappa(s)$, we have $\dim \Omega_{C_s/\kappa(s)}(p) = 1$, hence $\dim \Omega_{C_s/X_s}(p) \le 1$. In other words, $g$ is curvilinear.
	
	Thus, by Lemma \ref{daa} we have that the proper pushforward $g_*[C_S]$ is the fundamental cycle associated to a relative effective Cartier divisor $W \hookrightarrow X_S$ over $S$. Hence, for each irreducible noetherian $k$-scheme $S$ we have a well defined map
	\begin{equation}
	\Xi_S: \MMor_{>0}(C,X)(S) \rightarrow \LLinSys(X)(S)
	\end{equation}
	taking a morphism $g$ to $g_*[C_S]$. By Lemma \ref{dab}, we have that the maps $\Xi_S$ are natural on $S$. In other words, we have a natural transformation between the presheaves $\MMor_{>0}(C,X)$ and $\LLinSys(X)$ restricted to the category of irreducible noetherian schemes over $k$. 
	
	Since both presheaves are representable, they are Zariski sheaves. And since every locally noetherian scheme over $k$ is covered by irreducible noetherian schemes this natural transformation extends to the category $\cat{Noe}/k$. Yoneda Lemma implies this natural transformation corresponds uniquely to the morphism on the statement, which is $\Aut(C)$-invariant by definition.
\end{proof}

\section{Rational curves on rational surfaces}
\label{caa}

Before the proof of Theorem \ref{aab}, we recall two results. The first one is the theorem on dimension of fibers in the convenient presentation below. For a detailed proof see \cite[Proof of Proposition 5.5.1]{mustata-AG}.

\begin{lemma}[Dimension of fibers]
	\label{eaj}
	Let $f: Y_1 \rightarrow Y_2$ be a surjective morphism of algebraic varieties over $k$. Suppose that $Y_2$ is irreducible and that all (closed) fibers of $f$ are irreducible and of the same dimension $m$. Then:
	\begin{enumerate}
		\item there is a unique irreducible component $Y^0_1$ of $Y_1$ that dominates $Y_2$ and;
		\item every irreducible component $Z$ of $Y_1$ is a union of fibers of $f$ with \[
		\dim Z = \dim \overline{f(Z)} + m.
		\]
		In particular, $\dim Y^0_1 = \dim Y_2 + m$.
	\end{enumerate}
\end{lemma}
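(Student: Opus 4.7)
The plan is to prove part (2) first and then derive part (1) from it. Write $Y_1 = \bigcup_i Z_i$ for the decomposition into irreducible components and set $U_i = Z_i \setminus \bigcup_{j \neq i} Z_j$, an open dense subset of $Z_i$ consisting of points lying on no other component. The crucial preliminary observation is that, for each $y \in Y_2$, the closed subset $f^{-1}(y) \subset Y_1$ is irreducible by hypothesis and hence is contained in a single irreducible component of $Y_1$; moreover, if $f^{-1}(y)$ meets $U_i$, then this containing component must be $Z_i$, so $f^{-1}(y) \subset Z_i$.

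Next I would show, for each component $Z_i$, that $\dim Z_i = \dim \overline{f(Z_i)} + m$. Since $U_i$ is dense in $Z_i$ and $f|_{Z_i}: Z_i \to \overline{f(Z_i)}$ is a continuous dominant map, taking closures gives $\overline{f(U_i)} \supset f(\overline{U_i}) = f(Z_i)$, so $f(U_i)$ is dense in $\overline{f(Z_i)}$; being constructible by Chevalley, $f(U_i)$ therefore contains a non-empty open subset of $\overline{f(Z_i)}$. For any $y$ in this open set the preliminary observation forces $f^{-1}(y) \subset Z_i$, so the fiber of $f|_{Z_i}$ over $y$ has dimension exactly $m$. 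On the other hand, by the classical fiber dimension theorem for dominant morphisms of irreducible varieties, the generic fiber of $f|_{Z_i}$ has dimension $\dim Z_i - \dim \overline{f(Z_i)}$; choosing $y$ in the intersection of the two open sets yields $\dim Z_i - \dim \overline{f(Z_i)} = m$.

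I would then promote this to the statement that $Z_i$ is a union of fibers. For any $y \in f(Z_i)$ the Chevalley lower bound on non-empty fiber dimensions of a dominant morphism of irreducible varieties gives $\dim(Z_i \cap f^{-1}(y)) \ge \dim Z_i - \dim \overline{f(Z_i)} = m$. But $Z_i \cap f^{-1}(y)$ is a closed subset of the irreducible variety $f^{-1}(y)$ of dimension $m$, so the inclusion is forced to be an equality; hence $f^{-1}(y) \subset Z_i$, completing the proof of (2).

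Part (1) then follows quickly. Surjectivity of $f$ gives $Y_2 = \bigcup_i \overline{f(Z_i)}$, and the irreducibility of $Y_2$ forces $\overline{f(Z_i)} = Y_2$ for at least one index, which we label $Y_1^0$. For uniqueness, any dominating component is, by (2), a union of fibers of $f$ and hence equals $f^{-1}(Y_2) = Y_1$, so two dominating components would each coincide with $Y_1$ and therefore with each other. The dimension formula $\dim Y_1^0 = \dim Y_2 + m$ is the special case of (2) at this component. The step I expect to be the main obstacle is the density/constructibility argument producing an open subset of $\overline{f(Z_i)}$ over which fibers are fully contained in $Z_i$; this is the point at which the irreducibility of every fiber is used decisively, after which everything is a routine application of the standard fiber dimension theorem.
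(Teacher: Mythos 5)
Your argument for part (2), and the existence half of part (1), is correct and is essentially the standard one; note that the paper does not actually prove this lemma itself but defers to the cited lecture notes of Musta\c{t}\u{a}, where the same strategy appears. The key chain --- irreducibility of the fibers forces each fiber to sit inside a single component; constructibility of $f(U_i)$ gives a dense open subset of $\overline{f(Z_i)}$ over which the fiber of $f|_{Z_i}$ is the full fiber of $f$, pinning down $\dim Z_i - \dim \overline{f(Z_i)} = m$; and the Chevalley lower bound on fiber dimensions then forces every fiber over $f(Z_i)$ to be entirely contained in $Z_i$ --- is exactly right.

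The uniqueness step in part (1), however, rests on a false intermediate claim. You assert that a dominating component, being a union of fibers, equals $f^{-1}(Y_2) = Y_1$. But by your part (2) a dominating component $Z_i$ is only the union of the fibers over $f(Z_i)$, which is a dense constructible subset of $Y_2$ and need not be all of $Y_2$. Concretely, let $Y_2 = \mathbb{A}^2$ and let $Y_1$ be the disjoint union of $Z_0 = \{xz = 1\} \subset \mathbb{A}^3$, mapped by $(x,y,z) \mapsto (x,y)$ onto $\{x \neq 0\}$, and $Z_1 = \mathbb{A}^1$, mapped isomorphically onto the line $\{x = 0\}$. Then $f$ is surjective and bijective on closed points, every fiber is a single reduced point (so $m = 0$ and all fibers are irreducible of the same dimension), and the unique dominating component $Z_0$ is a proper subset of $Y_1$. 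So ``dominating component $= Y_1$'' is not a valid route to uniqueness. The repair is short and uses only what you have already set up: if $Z_i \neq Z_j$ both dominate $Y_2$, then $f(U_i)$ and $f(U_j)$ each contain a dense open subset of $Y_2 = \overline{f(Z_i)} = \overline{f(Z_j)}$, so some closed point $y$ lies in both images; the fiber $f^{-1}(y)$ then meets both $U_i$ and $U_j$, contradicting your preliminary observation that an irreducible fiber meeting $U_i$ is contained in $Z_i$ and hence is disjoint from $U_j$. With that substitution the proof is complete.
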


\begin{proposition}
	\label{dae}
	Let $\beta \in \Pic X$. Suppose that there exists a point $[C] \in |\beta|$ corresponding to a non-singular irreducible projective curve $C \subset X$. Then the general member of $|\beta|$ is irreducible and nonsingular.
	
	Furthermore, if $C$ is rational, then the general member is also rational.
\end{proposition}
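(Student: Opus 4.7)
The plan is to consider the flat proper family $\pi \colon \mathcal{D} \to |\beta|$, where $\mathcal{D} \subset X \times |\beta|$ is the universal relative effective Cartier divisor obtained by Yoneda from the identity on $|\beta| = \LLinSys_\beta(X)$. This family is flat by the definition of a relative effective Cartier divisor, and proper because $X$ is projective. Since $|\beta|$ is an irreducible projective space, any nonempty open subset is dense, so it suffices to produce a nonempty open in $|\beta|$ consisting of smooth irreducible fibers.

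First I would verify that the subset $U \subset |\beta|$ parametrizing divisors $D$ with smooth fiber is open. Smoothness of the morphism $\pi$ is an open condition on the source $\mathcal{D}$, and because $\pi$ is proper its non-smooth locus maps to a closed subset of $|\beta|$; the complement is $U$. Over $U$ the fibers are smooth projective curves over $k$, and for such curves connectedness is equivalent to irreducibility. I would then invoke Grothendieck's upper semicontinuity theorem applied to the flat proper family $\pi$: the function $[D] \mapsto h^0(D, \caO_D)$ is upper semicontinuous, so
\[
V \defeq \{[D] \in U : h^0(\caO_D) = 1\}
\]
is open in $U$ (hence in $|\beta|$), and by the preceding remark $V$ is precisely the locus of smooth irreducible fibers. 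The hypothesis gives $[C] \in V$, so $V$ is nonempty and therefore dense in $|\beta|$, establishing the first assertion.

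For the rationality claim I would use constancy of the arithmetic genus in the family. The short exact sequence
\[
0 \to \caO_X(-D) \to \caO_X \to \caO_D \to 0
\]
shows that $\chi(\caO_D) = \chi(\caO_X) - \chi(\caO_X(-D))$ depends only on the class $\beta$, so $p_a(D)$ is constant across $|\beta|$. If $C$ is rational then $p_a(C) = 0$; hence the general member $D$ is smooth and irreducible with $g(D) = p_a(D) = 0$, and since $k$ is algebraically closed, $D \cong \PR^1$.

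The argument is essentially routine once the universal family is in place; the one step invoking a nontrivial standard result is upper semicontinuity of $h^0(\caO_D)$, which is what upgrades openness of smoothness to openness of smooth irreducibility. No additional hypothesis on $X$ beyond being a projective (integral) surface is required.
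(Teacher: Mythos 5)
Your proof is correct and follows essentially the same route as the paper: the universal family over $|\beta|$, openness of the locus of smooth fibers via properness, openness of connectedness, and constancy of the arithmetic genus across the linear system. The only divergence is the connectedness step, where the paper uses Stein factorization together with EGA III, Corollaire 7.8.8, while you invoke upper semicontinuity of $h^0(\mathcal{O}_D)$ directly; both are standard and yours is, if anything, slightly more economical.
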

\begin{proof}
	Since $|\beta|$ represents the functor $\LLinSys_{\beta}(X)$ there exists a universal relative effective divisor $\mathbb{D} \subset X \times |\beta|$, given by a flat morphism
	\[
	\varphi: \mathbb{D} \hookrightarrow X \times |\beta| \rightarrow |\beta|
	\]
	such that for every effective divisor $D \subset X$ in the class $\beta$ we have $\varphi^{-1}([D]) \cong D$. 
	
	Since $C$ is nonsingular, the morphism $\varphi$ has a nonsingular fiber and since it is flat there is an open subset $U \subset |\beta|$ such that  $\varphi|_{\varphi^{-1}(U)}$ is smooth, in particular, every fiber over $U$ is non-singular.
	
	Moreover, by Stein factorization \cite[Corollary III.11.5]{hartshorne-AG} we have that $\varphi$ factors as
	\[
	\varphi: \mathbb{D} \xrightarrow{\varphi_1} \mathbf{Spec} \, \varphi_*\caO_{\mathbb{D}} \xrightarrow{\varphi_2} |\beta|,
	\]
	where the fibers of $\varphi_1$ are connected and $\varphi_2$ is a finite morphism. Since $H^0(C, \caO_C) \cong k$, it follows from \cite[Corollaire 7.8.8]{grothendieck-EGAIII} that there is a neighbourhood $V \subset |\beta|$ of $[C]$ such that $\varphi_2|_{\varphi_2^{-1}(V)}$ is an isomorphism. We conclude that the fibres of $\varphi$ over the open subset $U \cap V \subset |\beta|$ are nonsingular and connected, and hence irreducible.
	
	For the last assertion, if we assume that $C$ is rational, then we have that its arithmetic genus is $p_a(C) =0$. Since $\varphi$ is flat, all of its fibers have the same Hilbert polynomial, and hence, the same arithmetic genus. In particular every fiber over $U \cap V$ has arithmetic genus $0$ and, thus, it is a rational curve.
\end{proof}

\begin{proof}[Proof of Theorem \ref{aab}]
	Denote $M_{\beta} \defeq \Mor(\PR^1, X, \beta)$ for simplicity and let $\Xi$ be the morphism defined in Theorem \ref{dac}. Then, we have $M_\beta \cong \Xi^{-1}(|\beta|)$. Notice that the image of the morphism
	\[
	\Xi_{\beta} \defeq \Xi|_{M_\beta}: M_{\beta} \rightarrow |\beta|
	\]
	consists of effective divisors supported on rational curves.
	
	The fiber of $\Xi_{\beta}$ over a point in $\im(\Xi_\beta)$ consists of an $\Aut(\PR^1)$-orbit of a point $[f]$ corresponding to a morphism $f: \PR^1 \rightarrow X$ such that $f_*\PR^1$ is in the class $\beta$. Since there are only finitely many automorphisms $\phi:\PR^1 \rightarrow \PR^1$ such that $\phi \circ f = f$ (see for instance \cite[Lemma 2.1.12]{kockal-IQC}), we have that the stabilizers $\Aut(\PR^1)_{[f]}$ are of dimension zero and since $\Aut(\PR^1)$ is irreducible we have that the $\Aut(\PR^1)$-orbit of $[f]$ is irreducible of dimension $\dim \Aut(\PR^1) = 3$.

	If $\beta^2 < 0$, then any effective divisor $D$ in $|\beta|$ is linearly equivalent to $C$ if and only if $D = C$. Hence, $|\beta|$ is a point and $\dim |\beta| = 0$, thus $M_{\beta}$ is a unique $\Aut(\PR^1)$-orbit of dimension $3$.
	
	If $\beta^2 \ge 0$, since $|\beta|$ contains a nonsingular irreducible curve $C$, Proposition \ref{dae} implies that $\im(\Xi_{\beta})$ contains a Zariski dense open subset $U \subset |\beta|$.
	
	Define $M_U \defeq \Xi_{\beta}^{-1}(U)$. By Lemma \ref{eaj} (applied to the reduction of $M_U$), there exists a unique irreducible component $M^0_{U} \subset M_{U}$ such that the image of $M^0_{U}$ dominates $U$ and
	\[
	\dim M^0_{U} = \dim U + 3 = \dim |\beta| + 3.
	\]
	
	Let $M^0_\beta$ be the irreducible component of $M_\beta$ whose underlying topological space is closure of $M^0_U$ in $M_{\beta}$, so that $\dim M^0_{\beta} = \dim M^0_U$.
	
	Let $[C] \in |\beta|$ be a non-singular rational curve, by Riemann-Roch and the adjunction formula we obtain
	\[
	\dim |\beta| = [C]^2 + 1 = -K_X \cdot \beta - 1.
	\]
	And thus $M^0_\beta$ has expected dimension $-K_X \cdot \beta + 2$. 
	
	\begin{claim}
		Suppose that $M_{\beta}' \subset M_{\beta}$ is an irreducible component such that $M'_\beta \neq M^0_\beta$. Then, $\Xi_\beta(M'_\beta)$ is contained in a proper closed subset of $|\beta|$.
	\end{claim}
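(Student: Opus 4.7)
The plan is to argue by contradiction: assume $\overline{\Xi_\beta(M'_\beta)} = |\beta|$ and deduce that $M'_\beta = M^0_\beta$, contradicting the hypothesis. The whole argument hinges on a uniqueness-of-dominant-component property for the restriction $\Xi_\beta|_{M_U} \colon M_U \to U$, which is obtained by applying Lemma \ref{eaj}.

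The first step is to verify the hypotheses of Lemma \ref{eaj} for $\Xi_\beta|_{M_U}$ (working with underlying reduced schemes). Surjectivity holds by the definition $M_U = \Xi_\beta^{-1}(U)$. For any $[D] \in U$, Proposition \ref{dae} says $D$ is a nonsingular irreducible rational curve, so any $f \colon \PR^1 \to X$ with $f_*[\PR^1] = [D]$ must be birational onto $D$; smoothness of $D$ then forces the factorization of $f$ through the normalization to be an isomorphism $\PR^1 \xrightarrow{\sim} D$ followed by the inclusion. Any two such morphisms differ by an element of $\Aut(\PR^1)$, so $\Xi_\beta^{-1}([D])$ is a single $\Aut(\PR^1)$-orbit, irreducible of dimension $3$. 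Lemma \ref{eaj} therefore identifies $M^0_U$ as the \emph{unique} irreducible component of $M_U$ that dominates $U$.

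The second step exploits this uniqueness. If $\Xi_\beta(M'_\beta)$ is dense in $|\beta|$, then it meets the nonempty open $U$, so $M'_\beta \cap M_U$ is a nonempty open subset of the irreducible variety $M'_\beta$, hence irreducible and dense in $M'_\beta$. Since $M_U$ is open in $M_\beta$, its irreducible components are precisely the nonempty intersections $M_i \cap M_U$ with $M_i$ a component of $M_\beta$; in particular $M'_\beta \cap M_U$ is an irreducible component of $M_U$. Because the image of a dense subset under a continuous map is dense in the image, $\Xi_\beta(M'_\beta \cap M_U)$ is dense in $\Xi_\beta(M'_\beta)$, hence in $|\beta|$, hence in $U$. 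Thus $M'_\beta \cap M_U$ dominates $U$, and by uniqueness equals $M^0_U$. Taking closures in $M_\beta$ yields $M'_\beta = M^0_\beta$, the desired contradiction.

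The main subtlety is the fiber description in step one: one must use smoothness of $D$ (and not merely that $D$ is irreducible and rational) to guarantee that $f_*[\PR^1] = [D]$ forces $f$ to be birational onto $D$ with multiplicity one, so that the fiber is exactly an $\Aut(\PR^1)$-orbit of the expected dimension. Once this is settled, the rest of the proof is a formal consequence of the uniqueness clause of Lemma \ref{eaj} and the structure of irreducible components in open subschemes.
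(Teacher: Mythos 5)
Your proposal is correct and follows essentially the same route as the paper: assume the image is dense, observe that $M'_\beta \cap M_U$ is a nonempty irreducible open subset of $M'_\beta$ dominating $U$, invoke the uniqueness of the dominant component $M^0_U$ from Lemma \ref{eaj}, and take closures to force $M'_\beta = M^0_\beta$. The extra verification in your first step (that fibers over $U$ are single $\Aut(\PR^1)$-orbits of dimension $3$, and the application of Lemma \ref{eaj} to the reduction of $M_U$) is material the paper establishes in the surrounding proof of Theorem \ref{aab} before stating the claim, so it is consistent with, rather than divergent from, the intended argument.
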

	\begin{subproof}
	Suppose that $\Xi_\beta(M'_\beta)$ is not contained in a proper closed subset of $|\beta|$, in other words $\Xi_\beta(M'_\beta)$ is dense on $|\beta|$. In particular, $U \cap \Xi_\beta(M'_\beta)$ is dense in $U$ and thus $M'_{\beta} \cap M_U$ is an irreducible open subset of $M'_{\beta}$ dominating $U$. By definition of $M^0_U$ we have $M'_{\beta} \cap M_U \subseteq M_U^0$ and, thus, $M'_\beta \subset M^0_\beta$. Since $M'_\beta$ is an irreducible component we have $M'_\beta = M^0_\beta$.
	\end{subproof}
	
	Hence, if $M'_\beta$ is an irreducible component of $M_\beta$ distinct from $M^0_\beta$, by the second part of Lemma \ref{eaj} it follows that
	\[
	\dim M'_\beta = \dim \overline{\Xi_\beta(M'_{\beta})} + 3 < \dim |\beta| + 3 = -K_X \cdot \beta + 2
	\]
	which is impossible since we have the lower bound \eqref{aac}. Hence, $M_\beta = M^0_{\beta}$.
\end{proof}

\begin{example}
	\label{dah}
	Let $\sigma: X \rightarrow \PR^2$ be a blow-up of $\PR^2$ at $r \le 8$ points in general position $\{p_1, \dotsc, p_r\}$. Then $X$ is a Del Pezzo surface of degree $9-r$. The classes $\beta \in \Pic X$ such that $|\beta|$ contains nonsingular rational curves have been completely classified in \cite[Theorem 3.6]{gimiglianoal-13} for $r \le 7$. In other words, we have a complete list of classes satisfying the conditions of Theorem \ref{aab} for $X$. 
	
	On Table \ref{dai} we list such classes and the dimension of the irreducible $M_\beta \defeq \Mor(\PR^1, X, \beta)$ when $r = 6$, that is, when $X$ is isomorphic to a nonsingular cubic surface in $\PR^3$. In particular, we note that there exist linear systems $|\beta|$ with nonsingular rational curves for all values of $\beta^2$ greater or equal to $-1$. We refer the reader to the aforementioned reference for the complete list for all $r \le 7$.
	
	\begin{table}[h]
		\begin{center}
			\begin{tabular}{|c|c|c|c|}
				\hline
				\multirow{2}{3em}{\centering $\beta^2$} & \multirow{2}{2em}{\centering $d$} & \multirow{2}{8em}{\centering $(m_1, \dotsc,m_6)$ \\ {\small up to permutation}} & \multirow{2}{4em}{\centering $\dim M_\beta$} \\
				& & & \\
				\hline \hline
				\multirow{3}{3em}{\centering $-1$}  & $0$ & $(-1,0,0,0,0,0) $ &  \multirow{3}{3em}{\centering $3$}\\
				& $1$ & $(1,1,0,0,0,0)$ &  \\
				& $2$ & $(1,1,1,1,1,0)$ & \\
				\hline
				\multirow{3}{3em}{\centering $0$} & $1$ & $(1,0,0,0,0,0)$ & \multirow{3}{3em}{\centering $4$} \\
				& $2$ & $(1,1,1,1,0,0)$ & \\
				& $3$ & $(2,1,1,1,1,1)$ & \\
				\hline
				\multirow{9}{3em}{\centering $1 + 2t$} & $1+ t$ &$(t,0,0,0,0,0)$ & \multirow{9}{4em}{\centering $5 + 2t$} \\
				& $2 +t$   & $(1 + t,1,1,0,0,0)$ &  \\
				& $2 +2t$  & $(1+t,1+t,1+t,1+t,0,0)$ &  \\
				& $3 + t$  & $(2+t,1,1,1,1,0)$ & \\ 
				& $3 + 2t$ & $(2+t,1+t,1+t,1+t,1,0)$ & \\
				& $3 + 3t$ & $(2+2t,1+t,1+t,1+t,1+t,t)$ & \\
				& $4 + 2t$ & $(2+t,2+t,2+t,1+t,1,1)$ & \\
				& $4 + 3t$ & $(2+2t,2+t,2+t,1+t,1+t,1+t)$ & \\
				\hline
				\multirow{9}{3em}{\centering $2 + 2t$} & $2 + t$ & $(1+t,1,0,0,0,0)$ &  \multirow{9}{3em}{\centering $6 + 2t$} \\
				& $3 + t$ & $(2+t,1,1,1,0,0)$ & \\
				& $3 + 2t$ & $(2+t,1+t,1+t,1+t,0,0)$ & \\ 
				& $4 + 2t$ & $(2+t,2+t,2+t,1+t,1,0)$ & \\
				& $4 + t$ & $(3+t,1,1,1,1,1)$ & \\
				& $4 + 3t$ & $(3+2t,1+t,1+t,1+t,1+t,1+t)$ & \\
				& $5 + 2t$ & $(3+t,2+t,2+t,2+t,1,1)$ & \\
				& $5 + 3t$ & $(3+2t,2+t,2+t,2+t,1+t,1+t)$ & \\ 
				& $6 + 3t$ & $(3+2t,3+t,2+t,2+t,2+t,2+t)$ & \\
				\hline
				\multirow{4}{3em}{\centering $4$} & $4$ & $(2,2,2,0,0,0)$ &  \multirow{4}{3em}{\centering $8$} \\
				& $6$ & $(4,2,2,2,2,0)$ & \\
				& $8$ & $(4,4,4,2,2,2)$ & \\
				& $10$ & $(4,4,4,4,4,4)$ & \\
				\hline
				
			\end{tabular}
		\end{center}
		\caption{List of classes $\beta$ satisfying hypothesis of Theorem \ref{aab} on a smooth cubic surface in $\PR^3$.}
		\label{dai}
	\end{table}
	
	The information on Table \ref{dai} is organized as follows: for any $\beta \in \Pic X$ recall we can write $\beta = d \alpha - \sum_{i=1}^r m_i \mathcal{\varepsilon}_i$, where $\alpha$ is the pullback of the class of a line in $\PR^2$ and $\varepsilon_i$ is the class of the exceptional divisor $\sigma^{-1}(p_i)$. For each non-negative integer $t$, we list each $\beta$ in terms of $d \defeq d(t)$ and the tuples $(m_1, \dotsc, m_6) \defeq (m_1(t), \dotsc, m_6(t))$. Moreover, the classes are listed up to permutation of the $m_i$.
\end{example}

\printbibliography

\end{document}